\journal{arXiv}
\DeclareSymbolFont{largesymbolsA}{U}{txexa}{m}{n}
\DeclareMathSymbol{\bigtimes}{\mathop}{largesymbolsA}{16}
\newtheorem{theorem}{Theorem}
\numberwithin{equation}{section}
\newtheorem{remark}{Remark}
\newcommand{\bfs}[1]{{\boldsymbol #1}}
\newcommand{\Question}[1]{\marginpar{}}
\renewcommand{\Question}[1]{%
            \marginpar{\flushleft\scriptsize\bfseries\upshape#1}}
\begin{document}

\begin{frontmatter}


\title{High-order generalized-$\alpha$ methods}

%

\author[ad1]{Quanling Deng\corref{corr}}
\cortext[corr]{Corresponding author}
\ead{Quanling.Deng@curtin.edu.au}

\author[ad1]{Pouria Behnoudfar}
\ead{pouria.behnoudfar@postgrad.curtin.edu.au}
\author[ad1,ad2]{Victor M. Calo}
\ead{Victor.Calo@curtin.edu.au}
%

%
%
%
%
%
\address[ad1]{Department of Applied Geology, Curtin University, Kent Street, Bentley, Perth, WA 6102, Australia}
\address[ad2]{Mineral Resources, Commonwealth Scientific and Industrial Research Organisation (CSIRO), Kensington, Perth, WA 6152, Australia}
%

\begin{abstract}
The generalized-$\alpha$ method encompasses a wide range of time integrators. The method possesses high-frequency dissipation while minimizing unwanted low-frequency dissipation  and the numerical dissipation can be controlled by the user. The method is unconditionally stable and is of second-order accuracy in time. We extend the second-order generalized-$\alpha$ method to third-order in time while the numerical dissipation can be controlled in a similar fashion. We establish that the third-order method is unconditionally stable. We discuss a possible path to the generalization to higher order schemes. All these high-order schemes can be easily implemented into programs that already contain the second-order generalized-$\alpha$ method.
\end{abstract}

\begin{keyword}
generalized-$\alpha$ method \sep \sep spectral analysis \sep time integrator

\end{keyword}

\end{frontmatter}

\vspace{0.5cm}

\section{Introduction}
The generalized-$\alpha$ method was introduced by Chung and Hulbert in \cite{chung1993time} for solving hyperbolic equations arising in structural dynamics. The method was then applied to solve the parabolic equations such as the Reynolds-averaged Navier-Stokes equations in the computational fluid dynamics; see \cite{jansen2000generalized}. Since then, the method has been widely used in engineering and sciences due to the following three attractive features: second-order accuracy in time, unconditional stability, and user-control on the high-frequency numerical dissipation. 



 The generalized-$\alpha$ method  produces an algorithm which provides an optimal combination of high-frequency and low-frequency dissipation in the sense that for a given value of high-frequency dissipation, the algorithm minimizes the low-frequency dissipation; see \cite{chung1993time}. The robustness of the generalized-$\alpha$ method for parabolic systems has been successfully used to simulate a wide range of engineering applications \cite{sarmiento2018energy,bazilevs2007variational,bazilevs2006isogeometric,gomez2008isogeometric,gomez2010isogeometric}.

To our best knowledge, the generalized-$\alpha$ method are limited to be second-order accurate in time while the high-order Runge-Kutta schemes, the Adams-Moulton schemes, and backward differentiation formulas (see \cite{butcher2016numerical}) do not control numerical dissipation explicitly. Thus, the goal of this work is to devise and analyze a third-order generalized-$\alpha$ method. The generalized-$\alpha$ method involves a parameter $\alpha_m$ for the time-derivative term and a parameter $\alpha_f$ for other terms. These parameters are then used for the discrete representation of the ordinary differential equation. The main idea of our generalization is to view the solution representations as Taylor expansions and then assign extra parameters to the higher-order terms in this expansion to achieve higher-order accuracy. More precisely, in the generalized-$\alpha$ method, the term $U_{n+\alpha_f}$ in the discrete PDE is written as $U_n + \alpha_f (U_{n+1} - U_n)$. This representation limits the method from obtaining higher-order accuracy. Thus, we view $U_n + \alpha_f (U_{n+1} - U_n)$ as a Taylor expansion of $U_{n+\alpha_f}$ and expand the representation $U_n^{\alpha_f} = U_n + \tau \dot U_n + \tau \alpha_f (\dot U_{n+1} - \dot U_n)$ to seek for third-order accuracy. Herein, $\tau$ is the time-step size and the dot specifies a time derivative. The parameter $\alpha_f$ in the notation $U_{n+\alpha_f}$ in the generalized-$\alpha$ method behaves like a sub-time-step. This notation $U_{n+\alpha_f}$ is no longer valid for $U_n + \tau \dot U_n + \tau \alpha_f (\dot U_{n+1} - \dot U_n)$. Thus, we use $U_n^{\alpha_f}$. To a certain extent, the sub-time-step is performed on the higher-order derivatives. We represent the other terms in the discrete PDE using a similar construction. We determine the free parameters $\alpha_m$ and $\alpha_f$ by Taylor series analysis.  We then study the spectral properties of the resulting amplification matrix to determine the unconditional stability region. The numerical dissipation is then user-controlled by the values of $\alpha_m$ and $\alpha_f$ in the unconditional stability region.

The rest of this paper is organized as follows. Section \ref{sec:ho} describes the main idea of the high-order generalized-$\alpha$ methods. We prove the formal third-order accuracy in time. Section \ref{sec:sa} establishes the unconditional stability region. We also discuss the control on the eigenvalues of the resulting amplification matrix.  Concluding remarks are given in Section \ref{sec:con}.

\section{High-order generalized-$\alpha$ methods} \label{sec:ho}
We consider the first-order ordinary differential equation (ODE)
\begin{equation} \label{eq:ode}
\begin{aligned}
\dot u + \lambda u & = 0, \qquad t \in [0, \mathcal{T}], \\
u(0) & = u_0,
\end{aligned}
\end{equation}
where $u_0$ is the initial solution. 
We partition the time interval $[0,\mathcal{T}]$ as $0 = t_0 < t_1 < \cdots < t_N = \mathcal{T}$ and let $\tau_n = t_{n+1} - t_n$ be the time step-size. We assume a uniform partitioning and denote the time step-size as $\tau$. We denote by $U_n, V_n, A_n$  the approximations to $U(t_n), \dot U(t_n), \ddot U(t_n)$, respectively. The time-marching scheme of the generalized-$\alpha$ method for solving \eqref{eq:ode} is given by: 
\begin{subequations} \label{eq:ga2}
\begin{align}
V_{n+\alpha_m} & = -\lambda U_{n+\alpha_f},  \label{eq:ga21} \\
U_{n+1} & = U_n + \tau V_n + \tau \gamma_1(V_{n+1} - V_n),  \label{eq:ga22}\\
V_{n+\alpha_m} & = V_n + \alpha_m (V_{n+1} - V_n),  \label{eq:ga23}\\
U_{n+\alpha_f} & = U_n + \alpha_f (U_{n+1} - U_n)  \label{eq:ga24}
\end{align}
\end{subequations}
with the initial solution $U_0 = u_0$ and initial velocity $-\lambda U_0$.

The equations in \eqref{eq:ga23} and \eqref{eq:ga24} represent the approximations of $V_{n+\alpha_m}$ and $U_{n+\alpha_f}$, respectively, while the equation \eqref{eq:ga21} concerns the consistency of the discrete approximation to the ODE.  When $\gamma_1 = \frac{1}{2} + \alpha_m - \alpha_f$, this is equivalent to the second-order generalized-$\alpha$ method written in a different way; see  \cite{jansen2000generalized}. 

\subsection{Main idea}
Equations in \eqref{eq:ga23}-\eqref{eq:ga24} resemble a sub-step time-marching. The limited accuracy of the sub-step time-marching restricts the method from obtaining higher-order accuracy. The reason is that the linear combination of two Taylor expansions can remove part of the error terms.

To overcome this limitation, we view these representations as low-order accurate Taylor expansions. We then seek higher-order schemes by applying higher-order accurate Taylor expansions. Thus, for example, we seek a third-order generalized-$\alpha$ method in the form
\begin{subequations} \label{eq:3o}
\begin{align}
V_n^{\alpha_m} & = -\lambda U_n^{\alpha_f}, \label{eq:3o1}\\
V_{n+1} & = V_n + \tau A_n + \tau \gamma_1(A_{n+1} - A_n), \label{eq:3o2}\\
U_{n+1} & = U_n + \tau V_n + \frac{\tau^2}{2} A_n +  \frac{\tau^2}{2} \gamma_2(A_{n+1} - A_n), \label{eq:3o3}\\
V_n^{\alpha_m} & = V_n + \tau A_n + \tau \alpha_m (A_{n+1} - A_n), \label{eq:3o4}\\
U_n^{\alpha_f} & = U_n + \tau V_n + \tau \alpha_f (V_{n+1} - V_n), \label{eq:3o5}
\end{align}
\end{subequations}
with the initial solution $U_0 = u_0$, initial velocity $-\lambda U_0$, and initial acceleration $\lambda^2 U_0$. 

We determine the coefficients $\gamma_1$ and $\gamma_2$ to achieve third-order accuracy. 
Herein, we change the notation $V_{n+\alpha_m}$ and $U_{n+\alpha_f}$ to $V_n^{\alpha_m}$ and $U_n^{\alpha_f}$ as the terms on the right-hand sides of the equations \eqref{eq:3o4} and \eqref{eq:3o5} are no longer sub-step time-marching on the primary variables. They are approximations in terms of the parameters $\alpha_m$ and $\alpha_f$. The sub-step time-marching occurs on the first derivative. More precisely, we can rewrite 
\begin{equation}
U_n^{\alpha_f}  = U_n + \tau \big( V_n + \alpha_f (V_{n+1} - V_n) \big) = U_n + \tau V_{n+\alpha_f}.
\end{equation}
Thus, for $U_n^{\alpha_f}$, the sub-step time-marching is on its derivative $V_n$. By considering sub-step time-marching on higher-order derivatives, we obtain higher-order generalized-$\alpha$ schemes.
In general, for $k\ge 0$, we seek for $(k+2)$-th order generalized-$\alpha$ method in the form
\begin{equation} \label{eq:ho}
\begin{aligned}
V_n^{\alpha_m} & = -\lambda U_n^{\alpha_f}, \\
U_{n+1}^{(k)} & = U_n^{(k)} + \tau U_n^{(k+1)} + \tau \gamma_1( U_{n+1}^{(k+1)}  -  U_n^{(k+1)} ), \\
& \vdots \\
U^{(1)}_{n+1} & = U^{(1)}_n + \tau U^{(2)}_n +  \frac{\tau^2}{2} U^{(3)}_n + \cdots + \frac{\tau^{k}}{k!}  U^{(k+1)}_n + \frac{\tau^{k}}{k!} \gamma_{k} ( U_{n+1}^{(k+1)} - U_n^{(k+1)}), \\
U_{n+1}^{(0)} & = U_n^{(0)}  + \tau U_n^{(1)}  + \frac{\tau^2}{2} U_n^{(2)}  + \cdots + \frac{\tau^{k+1}}{(k+1)!}  U_n^{(k+1)}  + \frac{\tau^{k+1}}{(k+1)!}  \gamma_{k+1} ( U_{n+1}^{(k+1)} - U_n^{(k+1)}), \\
V_n^{\alpha_m} & = V_n + \tau V_n^{(1)} + \cdots + \frac{\tau^{k}}{k!}  V_n^{(k)}  + \frac{\tau^{k}}{k!}  \alpha_m ( V_{n+1}^{(k)} - V_n^{(k)}), \\
U_n^{\alpha_f} & = U_n + \tau U_n^{(1)} + \cdots + \frac{\tau^{k}}{k!}  U_n^{(k)}  + \frac{\tau^{k}}{k!} \alpha_f ( U_{n+1}^{(k)} - U_n^{(k)}),\\
\end{aligned}
\end{equation}
where the superscripts $(k)$ represent the $k$-th order derivative in time. The initial conditions are given by $U_0^{(j)} = (-\lambda)^j U_0, j =0,1,\cdots, k+1.$ For $k=0, 1$, this reduces to the second- and third-order generalized-$\alpha$ methods.

\subsection{Third-order accuracy in time}
To construct a third-order accurate scheme in the form of \eqref{eq:3o}, we need to find the conditions on the parameters. We have the following result.

\begin{theorem} \label{thm:3o}
Assume that the solution is sufficiently smooth with respect to time. The scheme in \eqref{eq:3o} is third-order accurate in time given
\begin{equation} \label{eq:3ov1}
\gamma_1 = \gamma_2  = \frac{5}{ 12} + \alpha_m - \alpha_f.
\end{equation}
\end{theorem}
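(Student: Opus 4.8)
The plan is to treat \eqref{eq:3o} as a linear one-step recurrence on the state $(U_n,V_n,A_n)$ and to compare the numerical amplification with the exact one. Since the test equation \eqref{eq:ode} has solution $u(t)=u_0e^{-\lambda t}$ with $u^{(j)}=(-\lambda)^j u$, one exact step multiplies the solution by $e^{-z}$, where $z=\lambda\tau$. I would therefore build the $3\times3$ amplification matrix $G(z)$ of the scheme and show that its principal eigenvalue $\mu_1(z)$, the branch with $\mu_1(0)=1$, satisfies $\mu_1(z)=e^{-z}+\mathcal{O}(z^4)$ precisely when \eqref{eq:3ov1} holds. Since $\mu_1(z)^n=e^{-\lambda t_n}(1+\mathcal{O}(t_n\tau^3))$, this is exactly the statement of third-order accuracy in time.

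First I would eliminate the intermediate quantities. Substituting \eqref{eq:3o4} and \eqref{eq:3o5} into the consistency relation \eqref{eq:3o1} gives one scalar equation; using \eqref{eq:3o2} to replace $V_{n+1}-V_n$ then lets me solve for the increment $A_{n+1}-A_n$ as a linear functional of $(U_n,V_n,A_n)$, with denominator $\alpha_m+z\alpha_f\gamma_1$. Back-substitution into \eqref{eq:3o2} and \eqref{eq:3o3} expresses $(U_{n+1},V_{n+1},A_{n+1})$ linearly in $(U_n,V_n,A_n)$. Passing to the dimensionless state $(U_n,\tau V_n,\tau^2 A_n)$ makes $G$ depend only on $z,\alpha_m,\alpha_f,\gamma_1,\gamma_2$, which is convenient for the expansion.

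Next I would form the characteristic polynomial $\chi(\mu,z)=\det(\mu I-G(z))$ and impose the order condition $\chi(e^{-z},z)=\mathcal{O}(z^4)$. The factorization at $z=0$ shows that $\mu=1$ is a simple root with $\partial_\mu\chi(1,0)=1/\alpha_m\neq0$, so this condition is equivalent to $\mu_1(z)-e^{-z}=\mathcal{O}(z^4)$. Expanding $\chi(e^{-z},z)$ in powers of $z$, the lowest-order coefficients vanish identically because the construction is consistent, and requiring the first two nontrivial coefficients to vanish yields two linear equations in $\gamma_1$ and $\gamma_2$ (with $\alpha_m,\alpha_f$ as free parameters); solving them gives $\gamma_1=\gamma_2=\tfrac{5}{12}+\alpha_m-\alpha_f$.

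The main obstacle is conceptual rather than computational: the auxiliary variables do not themselves approximate $\dot u,\ddot u$ to third order. Applying $G$ once to the nominal vector $(1,-z,z^2)$ already reveals an $\mathcal{O}(z^3)$ defect in the acceleration governed by $\alpha_f/\alpha_m$, so the accuracy cannot be read off from a one-step residual and must be extracted from the genuine principal eigenvalue. This is why the matrix/eigenvalue route is needed, and why a naive single-step Taylor match would produce the wrong coefficients. The remaining effort is bookkeeping: the denominator $\alpha_m+z\alpha_f\gamma_1$ must be expanded as a geometric series and every term tracked through $\mathcal{O}(z^4)$. Finally, to upgrade the principal-root statement to genuine convergence of $U_n$, one uses that the spurious eigenvalues stay inside the unit disk, namely the unconditional stability established in Section \ref{sec:sa}, together with the consistent initial data $U_0^{(j)}=(-\lambda)^j U_0$, which keeps the spurious modes from being excited.
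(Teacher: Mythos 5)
Your overall route is the paper's own argument recast in eigenvalue language: the paper applies the Cayley--Hamilton identity to $G$ to obtain the scalar recursion $U_{n+1}-G_1U_n+G_2U_{n-1}-G_3U_{n-2}=0$ and Taylor-expands the exact solution through it, which is precisely your condition $\chi(e^{-z},z)=\mathcal{O}(z^4)$; your additional checks (that $\partial_\mu\chi(1,0)=1/\alpha_m\neq 0$ makes the principal root well defined, and that stability plus the consistent starting values $U_0^{(j)}=(-\lambda)^jU_0$ are needed to convert the local estimate into convergence) are correct and are in fact more careful than what the paper writes. The gap is in the step that actually determines the coefficients. Third-order accuracy requires only \emph{one} nontrivial coefficient of $\chi(e^{-z},z)$ to vanish, namely the $z^3$ coefficient,
\begin{equation*}
-5+6\gamma_1+6\gamma_2+12\alpha_f-12\alpha_m=0,
\end{equation*}
which leaves a one-parameter family of third-order schemes; the $z^4$ term is already part of the admissible $\mathcal{O}(z^4)$ residual. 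The values in \eqref{eq:3ov1} are obtained by adjoining to this single order condition the \emph{design choice} $\gamma_1=\gamma_2$ (same representation of the increment $A_{n+1}-A_n$ in both update equations), which is not an order condition at all.

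Your proposal instead imposes that the first \emph{two} nontrivial coefficients (the $z^3$ and the $z^4$ ones) vanish. That system is the one the paper solves in Remark \ref{rem:r1}, and its solution is \eqref{eq:3ov2}, i.e.\ $\gamma_1=3\alpha_m/(2+3\alpha_f)$ with a different $\gamma_2$ --- not $\gamma_1=\gamma_2=\tfrac{5}{12}+\alpha_m-\alpha_f$. The theorem's values do not satisfy your second condition: substituting $\gamma_1=\gamma_2=\tfrac{5}{12}+\alpha_m-\alpha_f$ into the $z^4$ bracket of \eqref{eq:lerr}, $-5-2\gamma_1+6\gamma_2+12\alpha_f-12\gamma_1\alpha_f$, leaves
\begin{equation*}
-\tfrac{10}{3}+4\alpha_m+3\alpha_f+12\alpha_f^2-12\alpha_m\alpha_f,
\end{equation*}
which is not identically zero (it equals $\tfrac{11}{3}$ at $\alpha_m=\alpha_f=1$). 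So, carried out literally, your procedure would produce a different (also third-order) scheme and would fail to establish the theorem as stated; the fix is to drop the $z^4$ condition and replace it by the auxiliary relation $\gamma_1=\gamma_2$.
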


\begin{proof}
Plugging the last two equations in \eqref{eq:3o} into the first equation, we obtain
\begin{equation}
\begin{aligned}
U_{n+1} - \frac{\tau^2}{2} \gamma_2 A_{n+1} & = U_n + \tau V_n + \frac{\tau^2}{2}  (1 - \gamma_2) A_n, \\
V_{n+1} - \tau \gamma_1 A_{n+1} & = V_n  + \tau (1-\gamma_1)  A_n, \\
\tau \alpha_f \lambda V_{n+1} + \tau \alpha_m A_{n+1} & = -\lambda U_n + \big( \tau\lambda (\alpha_f - 1) - 1 \big) V_n + \tau (\alpha_m -1) A_n, \\
\end{aligned}
\end{equation}
which can be rewritten as a matrix system
\begin{equation}
\begin{bmatrix}
1 & 0 & -\frac{\gamma_2}{2} \\
0 & 1 & -\gamma_1 \\
0 & \alpha_f \lambda \tau & \alpha_m \\
\end{bmatrix}
\begin{bmatrix}
U_{n+1}  \\
\tau V_{n+1}  \\
\tau^2 A_{n+1}
\end{bmatrix}
= 
\begin{bmatrix}
1 & 1 & \frac{1 -\gamma_2}{2}  \\
0 & 1 & 1-\gamma_1 \\
-\lambda\tau & (\alpha_f - 1) \lambda\tau - 1 & (\alpha_m-1) \\
\end{bmatrix}
\begin{bmatrix}
U_n  \\
\tau V_n  \\
\tau^2 A_n
\end{bmatrix}.
\end{equation}

Thus, the amplification matrix becomes
\begin{equation} \label{eq:ampm}
\begin{aligned}
G & = 
\begin{bmatrix}
1 & 0 & -\frac{\gamma_2}{2}  \\
0 & 1 & -\gamma_1 \\
0 & \alpha_f \lambda \tau & \alpha_m \\
\end{bmatrix}^{-1} 
\begin{bmatrix}
1 & 1 & \frac{1 -\gamma_2}{2}   \\
0 & 1 & 1-\gamma_1 \\
-\lambda\tau & (\alpha_f - 1) \lambda\tau - 1 & (\alpha_m-1) \\
\end{bmatrix} \\
& = 
\begin{bmatrix}
\frac{2\alpha_m - (\gamma_2 - 2\gamma_1 \alpha_f) \lambda \tau}{2\alpha_m + 2\gamma_1 \alpha_f \lambda \tau} & \frac{2\alpha_m + 2\gamma_1 \alpha_f\lambda \tau - \gamma_2(1 + \lambda\tau) }{2\alpha_m + 2\gamma_1 \alpha_f \lambda \tau} & \frac{\alpha_m + \gamma_1 \alpha_f\lambda \tau - \gamma_2(1 + \alpha_f \lambda\tau) }{2( \alpha_m + \gamma_1 \alpha_f \lambda \tau)} \\[0.2cm]
- \frac{\gamma_1 \lambda \tau}{\alpha_m + \gamma_1 \alpha_f \lambda \tau} &  \frac{\alpha_m - \gamma_1 + \gamma_1 (\alpha_f -1) \lambda \tau }{\alpha_m + \gamma_1 \alpha_f \lambda \tau} & \frac{-\gamma_1 + \alpha_m}{\alpha_m + \gamma_1 \alpha_f \lambda \tau} \\[0.2cm]
- \frac{ \lambda \tau}{\alpha_m + \gamma_1 \alpha_f \lambda \tau} & - \frac{1+ \lambda \tau}{\alpha_m + \gamma_1 \alpha_f \lambda \tau} & \frac{-1 + \alpha_m + (-1 + \gamma_1) \alpha_f \lambda \tau}{\alpha_m + \gamma_1 \alpha_f \lambda \tau}
\end{bmatrix}.
\end{aligned}
\end{equation}

For the amplification matrix with arbitrary entries, a symbolic calculation verifies that
\begin{equation} \label{eq:a40}
G_0 U_{n+1} - G_1 U_n + G_2 U_{n-1} - G_3 U_{n-2} = 0,
\end{equation}
where $G_0 = 1$, $G_1 $ is the trace of $G$ (first invariant), $G_2$ is the sum of principal minors of $G$ (second invariant), and $G_3$ is the determinant of $G$ (third invariant). We apply the Taylor expansions to obtain
\begin{equation} \label{eq:te3}
\begin{aligned}
U_{n+1} & = U_n + \tau V_n + \frac{\tau^2}{2} A_n + \frac{\tau^3}{6} \dot A_n + \mathcal{O}(\tau^4), \\
U_{n-1} & = U_n - \tau V_n + \frac{\tau^2}{2} A_n - \frac{\tau^3}{6} \dot A_n + \mathcal{O}(\tau^4), \\
U_{n-2} & = U_n - 2\tau V_n + 4\frac{\tau^2}{2} A_n - 8 \frac{\tau^3}{6} \dot A_n + \mathcal{O}(\tau^4). \\
\end{aligned}
\end{equation}

Substituting \eqref{eq:te3} into \eqref{eq:a40}, we obtain
\begin{equation} \label{eq:a4}
( G_0 - G_1 + G_2 - G_3) U_n + \tau (G_0 - G_2 + 2G_3) V_n + \frac{\tau^2}{2} ( G_0 + G_2 -4G_3) A_n + \frac{\tau^3}{6} (G_0 - G_2 + 8G_3) \dot A_n = \mathcal{O}(\tau^4),
\end{equation}

Assuming sufficient regularity of the solution in time, we apply first and second derivatives to the discrete scheme and obtain 
\begin{equation}
\begin{aligned}
V_n & = -\lambda U_n, \\
A_n & = -\lambda V_n = \lambda^2 U_n, \\
\dot A_n & = -\lambda A_n = -\lambda^3 U_n.
\end{aligned}
\end{equation}
Substituting these equations into \eqref{eq:a4}, we obtain
\begin{equation} \label{eq:lerr}
\begin{aligned}
\frac{\lambda^3 \tau^3}{12(\alpha_m + \gamma_1 \alpha_f \lambda \tau)} \cdot \Big[ & (-5 + 6 \gamma_1 + 6 \gamma_2 + 12 \alpha_f - 12 \alpha_m) \\
& + \lambda \tau (-5 - 2\gamma_1 + 6 \gamma_2 + 12 \alpha_f - 12 \gamma_1 \alpha_f )  \Big] = \mathcal{O}(\tau^4)
\end{aligned}
\end{equation}

The second term in the bracket is of order $\mathcal{O}(\tau^4)$. Thus, this can be viewed as a part of the local truncation error and we move it to the right hand side. Assuming the same representation on $A_n$ and $A_{n+1}$ in \eqref{eq:3o}, then $\gamma_2 = \gamma_1$. Thus, we obtain third-order scheme (4th-order local truncation error) when
\begin{equation}
\begin{aligned} \label{eq:3o1}
-5 + 6 \gamma_1 + 6 \gamma_2 + 12 \alpha_f - 12 \alpha_m & = 0,\\
\gamma_2 - \gamma_1 & = 0, 
\end{aligned}
\end{equation}
which has the solution
\begin{equation}
\gamma_1 = \gamma_2 = \frac{5}{ 12} + \alpha_m - \alpha_f 
\end{equation}
A scheme which has a fourth-order local truncation error leads to a third-order accurate scheme in time.
\end{proof}

\begin{remark} \label{rem:r1}
Similarly, one can show that the scheme in \eqref{eq:3o} has third-order accuracy in time given that
\begin{equation} \label{eq:3ov2}
\gamma_1 = \frac{3 \alpha_m}{ 2 + 3 \alpha_f}, \qquad \gamma_2 = \frac{10 -9 \alpha_f - 36 \alpha_f^2 + 6\alpha_m + 36 \alpha_m \alpha_f }{ 12 + 18 \alpha_f},
\end{equation}
which is the solution of 
\begin{equation} \label{eq:3o2}
\begin{aligned}
-5 + 6 \gamma_1 + 6 \gamma_2 + 12 \alpha_f - 12 \alpha_m & = 0, \\
-5 - 2\gamma_1 + 6 \gamma_2 + 12 \alpha_f - 12 \gamma_1 \alpha_f & = 0 
\end{aligned}
\end{equation}
in \eqref{eq:lerr}. The second equation in either \eqref{eq:3o1} or \eqref{eq:3o2} is required for obtaining third-order accuracy in time. The second equation in either \eqref{eq:3o1} or \eqref{eq:3o2} is an auxiliary equation which contributes to a higher order (higher than or equal to $\mathcal{O}(\tau^4)$) error term in \eqref{eq:lerr}. A different equation, which is in terms both $\gamma_1$ and $\gamma_2$, leads to a different third-order scheme. Each scheme has a different unconditionally stable region, which we will discuss in detail in Section \ref{sec:sa}.
\end{remark}

\subsection{Higher-order accuracy in time}
We follow the derivations in the proof of Theorem \ref{thm:3o} to seek higher-order schemes in the form of \eqref{eq:ho}.
To seek $p$-th order  ($p\ge 4$) scheme,  we substitute the last two equations in \eqref{eq:ho} into the first equation and obtain a system written in a matrix form
\begin{equation}
L \bfs{U}_{n+1} = R \bfs{U}_n,
\end{equation}
where
\begin{equation*}
\begin{aligned}
L & = 
\begin{bmatrix}
1 & 0 & \cdots & 0 & -\frac{\gamma_{p-1}}{(p-1)!} \\[0.2cm]
0 & 1 & \cdots & 0 & -\frac{\gamma_{p-2}}{(p-2)!} \\
\vdots & \vdots & \ddots & \vdots & \vdots \\
0 & 0 & \cdots & 1 & -\frac{\gamma_1}{1!} \\[0.2cm]
0 & 0 & \cdots &\frac{ \alpha_f \lambda \tau}{(p-2)!} & \frac{\alpha_m}{(p-2)!} \\
\end{bmatrix}, \\
R & = 
\begin{bmatrix}
1 & \frac{1}{1!} & \cdots & \frac{1}{(p-2)!} & \frac{1 -\gamma_{p-1}}{(p-1)!} \\[0.2cm]
0 & 1 & \cdots & \frac{1}{(p-3)!}  & \frac{1 -\gamma_{p-2}}{(p-2)!} \\
\vdots & \vdots & \ddots & \vdots & \vdots \\
0 & 0 & \cdots & 1 & \frac{1  -\gamma_1}{1!} \\[0.2cm]
-\lambda\tau & -1-\lambda\tau & \cdots & \frac{(\alpha_f - 1) \lambda \tau}{(p-2)!} - \frac{1}{(p-1)!}&  \frac{\alpha_m - 1}{(p-2)!} \\
\end{bmatrix}, \\
\bfs{U}_j & = 
\begin{bmatrix}
U_j^{(0)}  \\
\tau U_j^{(1)}  \\
\vdots \\
\tau^{p-2} U_j^{(p-2)} \\
\tau^{p-1} U_j^{(p-1)} 
\end{bmatrix}, \qquad j=n, n+1.
\end{aligned}
\end{equation*}

Similarly, with a slight abuse of notation, the amplification matrix is defined as
\begin{equation}
G = L^{-1} R.
\end{equation}
For the amplification matrix with arbitrary entries, one has 
\begin{equation}
\sum_{j=0}^{p} (-1)^j G_j U_{n+1-j} = 0,
\end{equation}
where $G_j$ is the sum of the principal minors of order $j$ and this reduces to \eqref{eq:a40} for the third-order ($p=3$) scheme. Applying Taylor expansions of $U_{n+1-j}$ around $U_n$ and following the procedure for the third-order scheme, we obtain the following condition for $p$-th order scheme.
\begin{equation}
\gamma_j = C(p) + \alpha_m - \alpha_f, \qquad j = 1,2, \cdots, p-1,
\end{equation}
where $C(p)$ is a function of $p$ and some values are given in Table \ref{tab:cp}. The general pattern as well as the number of possible solutions for each order are open problems and subject to future work.

\begin{table}[ht]
\centering 
\begin{tabular}{|c |cccccccccc|} 
\hline
$k$ & 2 & 3 & 4 & 5 & 6 & 7 & 8 & 9  & 10 & 11 \\[0.1cm]
\hline
$C(p)$ & $\dfrac{1}{2}  $ & $\dfrac{5}{12}  $ & $ \dfrac{1}{3}  $ & $ \dfrac{31}{120}  $ & $ \dfrac{1}{5}  $ &  $ \dfrac{41}{252}  $ & $ \dfrac{1}{7}  $ & $ \dfrac{31}{240}  $ & $  \dfrac{1}{9}  $ &  $ \dfrac{61}{660}  $  \\[0.3cm]
\hline
\end{tabular}
\caption{$C(p)$ for various values of $p$.}
\label{tab:cp}
\end{table}

\begin{remark}
As for the third-order case, different auxiliary conditions lead to different stability regions. We apply $\gamma_j = \gamma_1, j=2,\cdots, p-1$ so that the last column of the amplification matrix when $\tau \to \infty$ is zero except the last entry, which is an eigenvalue bounded by 1.
\end{remark}

\section{Stability analysis and eigenvalue control} \label{sec:sa}
Section \ref{sec:ho} studies the high-order accurate implicit schemes. However, these schemes have conditional and unconditional stability regions. In general, for implicit schemes, unconditionally stable schemes are preferred to the conditionally stable schemes. Thus, we focus on finding the unconditionally stable regions for high-order schemes. 

\begin{theorem}
The third-order scheme \eqref{eq:3o} with $\gamma_j, j=1,2$ defined in Theorem \ref{thm:3o} is unconditionally stable for  
\begin{equation} \label{eq:amf}
\begin{aligned}
\alpha_m  \ge \frac{7}{12}, \qquad \frac{1}{2}  \le \alpha_f \le \alpha_m - \frac{1}{12}. 
\end{aligned}
\end{equation}
\end{theorem}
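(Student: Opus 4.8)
The plan is to reduce unconditional stability to a root condition on the characteristic polynomial of the amplification matrix $G$ in \eqref{eq:ampm}, and then to verify that condition via the bilinear transform and the Routh--Hurwitz criterion. Working with the scalar test equation, write $x := \lambda\tau$ and treat $x \ge 0$. By definition the scheme is unconditionally stable precisely when the spectral radius $\rho(G) \le 1$ for every admissible step size, i.e. for all $x \ge 0$, with any unimodular eigenvalue non-defective. Since the recurrence \eqref{eq:a40} identifies the eigenvalues of $G$ with the roots of
\begin{equation}
P(z;x) = z^3 - G_1 z^2 + G_2 z - G_3,
\end{equation}
where $G_1 = \operatorname{tr} G$, $G_2$ is the sum of principal minors, and $G_3 = \det G$ (each a rational function of $x$ sharing the denominator $\alpha_m + \gamma_1\alpha_f x$), the task becomes showing that all three roots of $P(\cdot\,;x)$ lie in the closed unit disk for every $x \ge 0$ under \eqref{eq:amf}.

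First I would substitute $z = (1+s)/(1-s)$, multiply through by $(1-s)^3$ and by the common denominator, and obtain a real cubic
\begin{equation}
Q(s;x) = a_3(x)\,s^3 + a_2(x)\,s^2 + a_1(x)\,s + a_0(x),
\end{equation}
whose coefficients are polynomials in $x$ with coefficients depending on $\alpha_m$, $\alpha_f$, and $\gamma_1 = \gamma_2 = \tfrac{5}{12} + \alpha_m - \alpha_f$. The Möbius map sends the open unit disk to the open left half-plane, so $|z|\le 1$ for all roots is equivalent to $\operatorname{Re}(s)\le 0$ for all roots of $Q$. Assuming $a_3(x) > 0$, the Routh--Hurwitz criterion for a cubic then reduces this to
\begin{equation}
a_2(x) \ge 0, \qquad a_0(x) \ge 0, \qquad a_1(x)a_2(x) - a_0(x)a_3(x) \ge 0,
\end{equation}
required for all $x \ge 0$; the strict versions place roots strictly inside, and the boundary cases, where $|z|=1$, must be verified to be semisimple separately.

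The core of the proof is to establish these polynomial inequalities on $x\in[0,\infty)$ over the parameter box \eqref{eq:amf}. I would compute the $a_i(x)$ explicitly, substitute $\gamma_1 = \gamma_2 = \tfrac{5}{12}+\alpha_m-\alpha_f$, and reorganize each $a_i$ as a polynomial in $x$ whose coefficients are polynomials in $\alpha_m,\alpha_f$. The two endpoints are the cleanest diagnostics: at $x=0$ (i.e. $\lambda=0$) the value $z=1$ is an eigenvalue with the other two governed by $\alpha_m,\alpha_f$, while the high-frequency limit $x\to\infty$ controls $\rho_\infty$. In that limit the last column of $G$ vanishes except for the $(3,3)$ entry, whose limit $1-\tfrac{1}{\gamma_1}$ is an eigenvalue; requiring it to exceed $-1$ forces $\gamma_1\ge\tfrac12$, which is exactly the right boundary $\alpha_f \le \alpha_m - \tfrac{1}{12}$, and the remaining $2\times2$ block limit together with the $x=0$ analysis should pin the threshold $\alpha_m \ge \tfrac{7}{12}$ and $\alpha_f \ge \tfrac12$. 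For intermediate $x$ I would try to show that, after substitution, the coefficient of each power of $x$ in $a_2$, $a_0$, and the product is individually non-negative on the box, which immediately yields non-negativity for all $x\ge 0$; where a coefficient can change sign I would complete the square or bound it using the active constraints $\alpha_m - \alpha_f \ge \tfrac{1}{12}$ and $\alpha_f \ge \tfrac12$.

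I expect the product condition $a_1 a_2 - a_0 a_3 \ge 0$ to be the main obstacle: it is the highest-degree polynomial in $x$ and the most involved in $\alpha_m,\alpha_f$, so certifying its sign over the whole half-line and parameter box is where the real work lies. The cleanest route is to arrange the coefficients so that the inequalities in \eqref{eq:amf} appear as precisely the sign conditions making each $x$-coefficient non-negative, with the endpoints $\tfrac{7}{12}$ and $\alpha_m-\tfrac{1}{12}$ emerging as the values where one such coefficient inequality first becomes tight. Finally, I would confirm that on the stability boundary every unimodular eigenvalue is simple, so that $\rho(G)\le 1$ indeed bounds the powers of $G$ and delivers unconditional stability.
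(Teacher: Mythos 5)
Your proposal is sound and it takes a genuinely different route from the paper. The paper never forms the characteristic polynomial or transforms it: it computes the eigenvalues of the two limiting matrices $A_0=\lim_{Re(T)\to 0}G$ and $A_\infty=\lim_{Re(T)\to\infty}G$ in closed form, derives parameter constraints by bounding their moduli (your diagnostics reproduce these exactly: the $(3,3)$ entry $1-1/\gamma_1\ge -1$ gives $\gamma_1\ge\tfrac12$, i.e.\ $\alpha_f\le\alpha_m-\tfrac{1}{12}$, and the upper-left $2\times 2$ block at infinity forces $\alpha_f\ge\tfrac12$, from which $\alpha_m\ge\tfrac{7}{12}$ follows by combination), intersects the conditions from the two limits, and then disposes of all intermediate values of $Re(T)$ with the single sentence that the eigenvalue bound is ``verified symbolically'' on the region \eqref{eq:amf}. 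Your route --- the Cayley map $z=(1+s)/(1-s)$ applied to $P(z;x)=z^3-G_1z^2+G_2z-G_3$ followed by Routh--Hurwitz --- replaces that unproved interior verification by a uniform-in-$x$ algebraic certificate, which is logically stronger: the paper's limit analysis by itself only yields necessary conditions, and sufficiency for finite $T$ is exactly what it delegates to the computer. What the paper's approach buys in exchange is the explicit spectrum of $A_\infty$, which it reuses in Section \ref{sec:sa} to set all high-frequency eigenvalues equal to $\rho_\infty$ and derive \eqref{eq:amaf}; your transformed coefficients $a_i(x)$ would not serve that second purpose.

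Two caveats, neither fatal. First, your plan defers the decisive algebra: certifying $a_1a_2-a_0a_3\ge 0$ on the box for all $x\ge 0$ is where the entire difficulty lives, and your primary tactic (each coefficient of each power of $x$ individually non-negative) can fail even for polynomials that are non-negative on the half-line, so be prepared for the fallback you mention (completing squares against the active constraints $\alpha_f\ge\tfrac12$, $\alpha_m-\alpha_f\ge\tfrac{1}{12}$); until that is carried out, your argument has the same evidentiary status as the paper's symbolic check. Second, handle the point $z=-1$: it is the image of $s=\infty$, so $a_3(x)=0$ precisely when $P(-1;x)=0$ and the cubic in $s$ degenerates; this degree-drop case must be treated separately, as must the semisimplicity of unimodular roots, which you correctly flag. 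Note also that both you and the paper effectively restrict to real $\lambda\tau\ge 0$; a genuinely complex $\lambda$ would require the complex-coefficient extension of the Hurwitz criterion, but that limitation is shared, not introduced by you.
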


\begin{proof}
To show unconditionally stability, we show the equivalent condition that all the eigenvalues of the amplification matrix $G$ defined in \eqref{eq:ampm} are bounded by 1 (strictly less than 1 for repeated real roots) for arbitrary $\lambda$ and $\tau$. The multiplication of $\lambda$ and $\tau$ appears as a single factor in $G$. In general, $\lambda$ is a complex number with positive real part. We denote $T= \lambda \tau$ and let $Re(T)$ vary in $\mathbb{R}^+$. 

Following closely the analysis on the second-order generalized-$\alpha$ method in \cite{chung1993time,jansen2000generalized}, firstly, let $Re(T) \to 0,$ the amplification matrix reduces to  
\begin{equation} 
A_0 = 
\begin{bmatrix}
1 & 1 - \frac{\gamma_2}{\alpha_m} & \frac{1}{2} - \frac{\gamma_2}{\alpha_m} \\[0.2cm]
0 & 1 - \frac{\gamma_1}{\alpha_m} & 1 - \frac{\gamma_1}{\alpha_m} \\[0.2cm]
0 & - \frac{1}{\alpha_m} & 1 - \frac{1}{\alpha_m}
\end{bmatrix},
\end{equation}
which has eigenvalues
\begin{equation}
\begin{aligned}
\eta_1 & = 1, \\
\eta_{2,3} & = \frac{1}{24\alpha_m} \Big(12 \alpha_f + 12\alpha_m -17 \pm \sqrt{(17-12\alpha_f)^2 - 24\alpha_m(7 +12\alpha_f ) + 144\alpha_m^2} \Big).
\end{aligned}
\end{equation}

For a complex eigenvalue, we bound its modulus by 1. Using symbolic calculation, we obtain
\begin{equation} \label{eq:lt0}
\begin{aligned}
\alpha_m  & \ge \frac{1}{4}, \\
\frac{11 - 12\alpha_m }{12}  & \le \alpha_f \le \frac{17}{12} + \sqrt{2\alpha_m + \alpha_m^2}. 
\end{aligned}
\end{equation}

Now, let $Re(T) \to \infty,$ with $\gamma_1 = \gamma_2$, 
the amplification matrix reduces to  
\begin{equation} 
A_\infty = 
\begin{bmatrix}
1 - \frac{1}{2 \alpha_f} & 1 - \frac{1}{2 \alpha_f} & 0 \\[0.2cm]
- \frac{1}{\alpha_f} & 1 - \frac{1}{\alpha_f} & 0 \\[0.2cm]
- \frac{1}{\gamma_1 \alpha_f} & - \frac{1}{\gamma_1 \alpha_f}  & 1 - \frac{1}{\gamma_1}
\end{bmatrix}.
\end{equation}
The eigenvalues are
\begin{equation}
\begin{aligned}
\eta_1 & = 1 - \frac{12}{5 - 12\alpha_f + 12\alpha_m}, \\
\eta_{2,3} & = \frac{15 + 8 \alpha_f(6 \alpha_f - 6\alpha_m -7) + 36 \alpha_m \pm \sqrt{(16\alpha_f -9) (5 - 12\alpha_f + 12\alpha_m)^2} }{4\alpha_f (5 - 12\alpha_f + 12\alpha_m)}.
\end{aligned}
\end{equation}
Similarly, if $\alpha_f \ge 9/16$, then we obtain real eigenvalues, which leads to
\begin{equation}
\begin{aligned}
\frac{7}{12} & \le \alpha_m  \le \frac{31}{48}, \\
\frac{1}{2} & \le \alpha_f \le \alpha_m - \frac{1}{12}. 
\end{aligned}
\end{equation}
or 
\begin{equation}
\begin{aligned}
\frac{31}{48}  < \alpha_m, \qquad \frac{1}{2}  \le \alpha_f \le \frac{9}{16}. 
\end{aligned}
\end{equation}
If $\alpha_f < 9/16$, we obtain complex eigenvalues, which leads to
\begin{equation}
\begin{aligned}
\frac{31}{48}  < \alpha_m, \qquad \frac{9}{16}  \le \alpha_f \le \alpha_m - \frac{1}{12}.
\end{aligned}
\end{equation}

Thus, for $Re(T) \to \infty,$  we obtain
\begin{equation} \label{eq:lti}
\begin{aligned}
\alpha_m  \ge \frac{7}{12}, \qquad \frac{1}{2}  \le \alpha_f \le \alpha_m - \frac{1}{12}. 
\end{aligned}
\end{equation}
Combining both \eqref{eq:lt0} and \eqref{eq:lti} and taking their intersection (as $Re(T) \to \infty$, the stability region is reduced) give the desired results. For finite value of $Re(T)$, we verify symbolically that all the eigenvalues are bounded by 1 in the region defined by \eqref{eq:amf}.
\end{proof}

The unconditionally stability region is different when we apply $\gamma_j, j=1,2$ defined in Remark \ref{rem:r1} to scheme \eqref{eq:3o}. The symbolic analysis is more involved. We show numerically the unconditional stability regions in Figure \ref{fig:3ostab}. Figure \ref{fig:3ostab} shows that the third-order scheme \eqref{eq:3o} with \eqref{eq:3ov2} has a larger stability region. 

\begin{figure}[!ht]
\centering
\includegraphics[width=12cm]{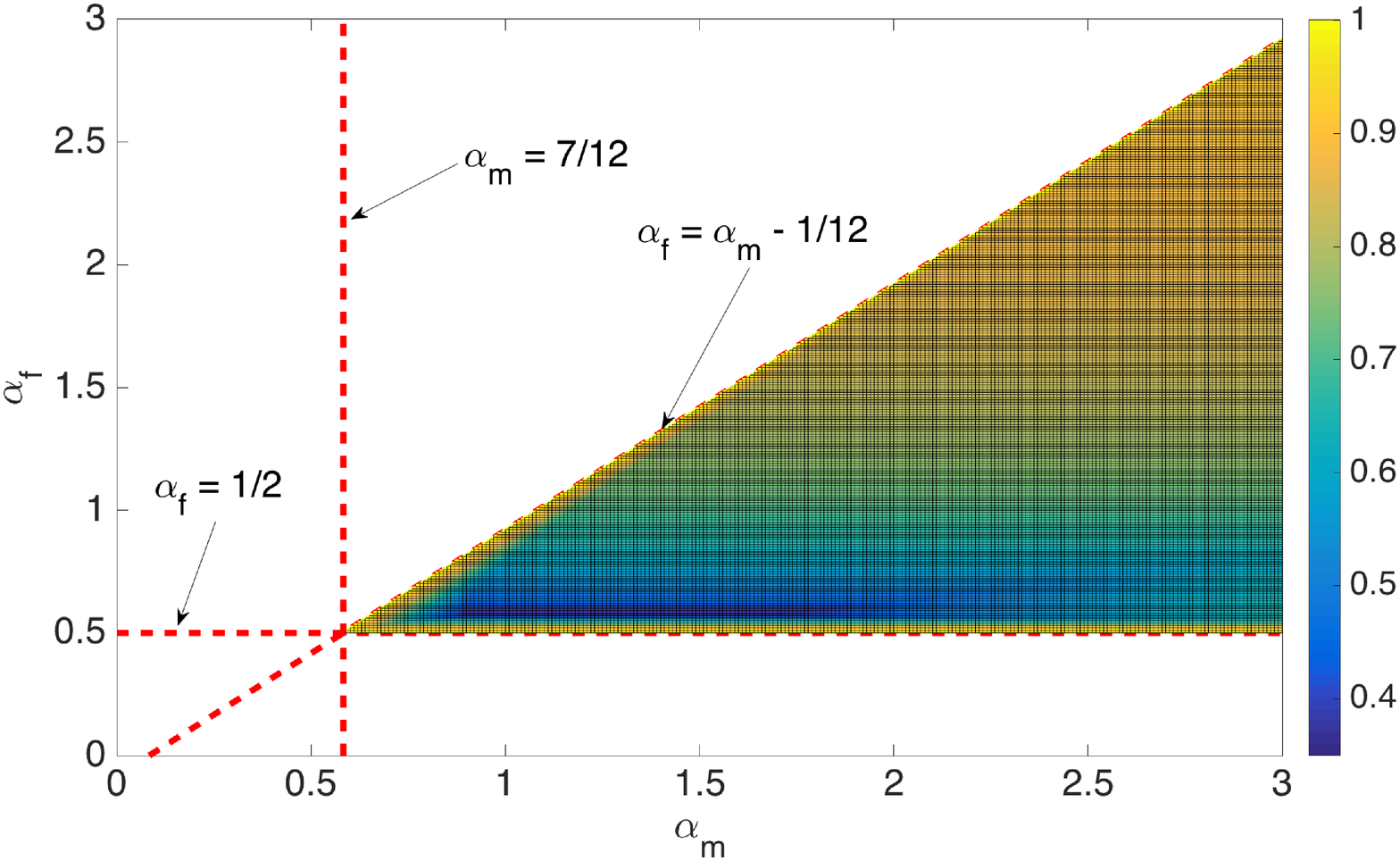}\\
\includegraphics[width=12cm]{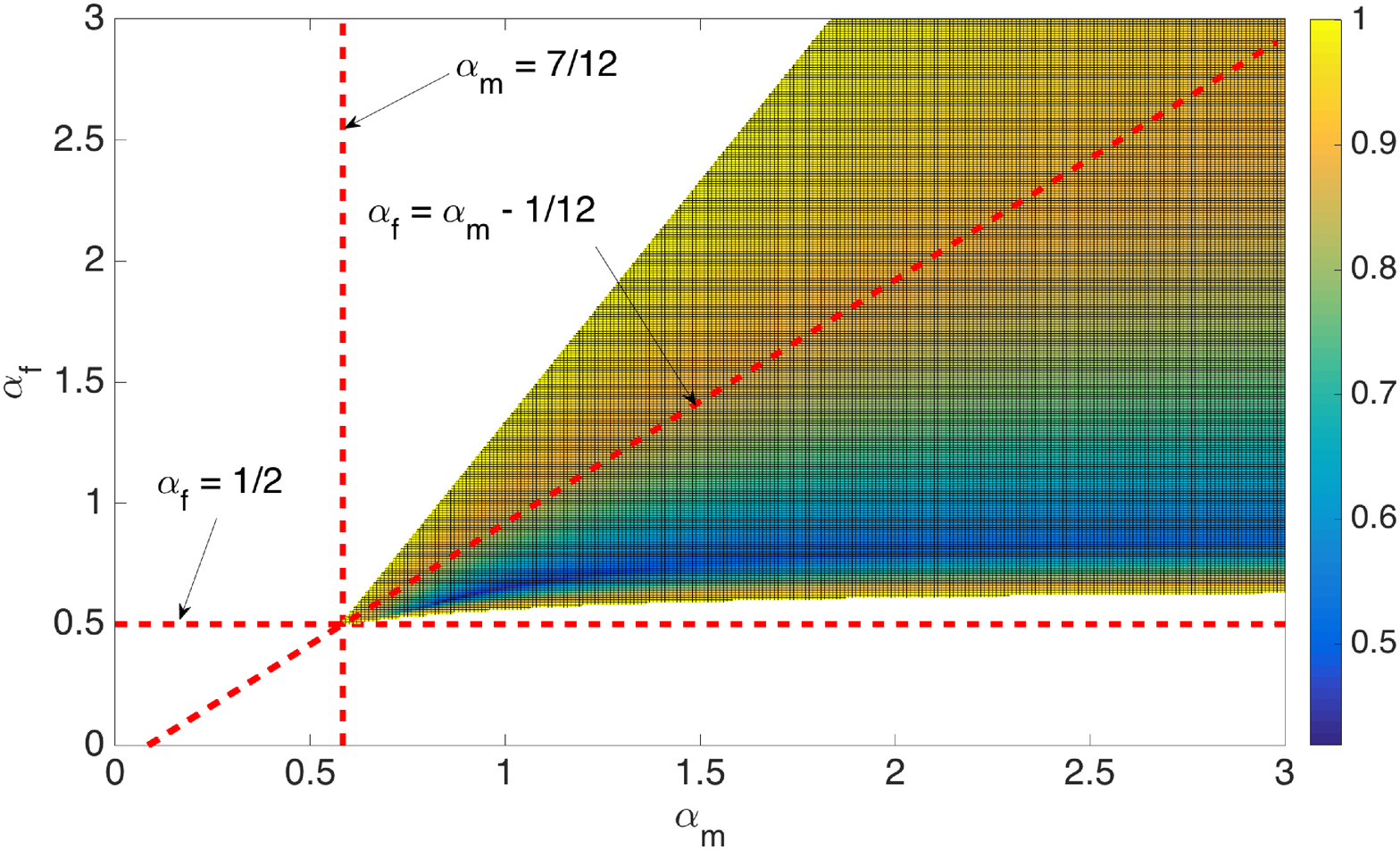}
\caption{Unconditional stability regions. Top plot is for the scheme \eqref{eq:3o} with \eqref{eq:3ov1} while the bottom plot is for scheme \eqref{eq:3o} with \eqref{eq:3ov2}. }
\label{fig:3ostab}
\end{figure}

To control the high-frequency numerical dissipation, following closely the analysis on the second-order generalized-$\alpha$ method in \cite{chung1993time,jansen2000generalized}, we set that all the eigenvalues at high-frequency, that is when $Re(T) \to \infty$, to be a user-controlled parameter $\rho_\infty$ as
\begin{equation} \label{eq:p}
\begin{aligned}
\Big| 1 - \frac{12}{5 - 12\alpha_f + 12\alpha_m} \Big| & = \rho_\infty, \\
\Big| \frac{15 + 8 \alpha_f(6 \alpha_f - 6\alpha_m -7) + 36 \alpha_m \pm \sqrt{(16\alpha_f -9) (5 - 12\alpha_f + 12\alpha_m)^2} }{4\alpha_f (5 - 12\alpha_f + 12\alpha_m)} \Big| & = \rho_\infty,
\end{aligned}
\end{equation}
which has a solution
\begin{equation} \label{eq:amaf}
\begin{aligned}
\alpha_m  = \frac{13 + 20\rho_\infty - 5\rho_\infty^2 }{12(\rho_\infty + 1)^2}, \qquad
\alpha_f  = \frac{1 + 3\rho_\infty}{2(\rho_\infty + 1)^2}
\end{aligned}
\end{equation}
for $0 \le \rho_\infty \le 1$. One controls the eigenvalues of the amplification matrix by setting $\rho_\infty $ and the high-frequency damping is happening when setting $ \rho_\infty$ close to zero. Other solutions to \eqref{eq:p} are possible and we refer to \ref{app:amaf} for details.


\section{Concluding remarks} \label{sec:con}
The generalized-$\alpha$ method unifies the description of several families of second-order time integrators with the attractive feature of controlling the high-frequency damping. We extend the method to higher-order schemes while maintaining all the attractive features. In particular, at each time step, the second-order generalized-$\alpha$ method solves implicitly one matrix system and then updates the other variables explicitly. This feature is also maintained for the higher-order schemes.  Additionally, the third-order method is still a single step method, allowing it to be easily introduced in a time adaptive loop. The generalization of the stability analysis to higher-order schemes as well as the generalization of the schemes for hyperbolic equations will be the subject of future work.

\section*{Acknowledgement}
This publication was made possible in part by the CSIRO Professorial Chair in Computational Geoscience at Curtin University and the Deep Earth Imaging Enterprise Future Science Platforms of the Commonwealth Scientific Industrial Research Organization, CSIRO, of Australia. Additional support was provided by the European Union's Horizon 2020 Research and Innovation Program of the Marie Sk{\l}odowska-Curie grant agreement No. 777778, the Mega-grant of the Russian Federation Government (N 14.Y26.31.0013), the Institute for Geoscience Research (TIGeR), and the Curtin Institute for Computation. The J. Tinsley Oden Faculty Fellowship Research Program at the Institute for Computational Engineering and Sciences (ICES) of the University of Texas at Austin has partially supported the visits of VMC to ICES. The authors also would like to acknowledge the contribution of an Australian Government Research Training Program Scholarship in supporting this research.

\section*{References}

\bibliographystyle{elsarticle-harv}\biboptions{square,sort,comma,numbers}
\bibliography{ref}

\begin{thebibliography}{8}
\expandafter\ifx\csname natexlab\endcsname\relax\def\natexlab#1{#1}\fi
\expandafter\ifx\csname url\endcsname\relax
  \def\url#1{\texttt{#1}}\fi
\expandafter\ifx\csname urlprefix\endcsname\relax\def\urlprefix{URL }\fi

\bibitem[{Bazilevs et~al.(2007)Bazilevs, Calo, Cottrell, Hughes, Reali, and
  Scovazzi}]{bazilevs2007variational}
Bazilevs, Y., Calo, V., Cottrell, J., Hughes, T., Reali, A., Scovazzi, G.,
  2007. Variational multiscale residual-based turbulence modeling for large
  eddy simulation of incompressible flows. Computer Methods in Applied
  Mechanics and Engineering 197~(1-4), 173--201.

\bibitem[{Bazilevs et~al.(2006)Bazilevs, Calo, Zhang, and
  Hughes}]{bazilevs2006isogeometric}
Bazilevs, Y., Calo, V.~M., Zhang, Y., Hughes, T.~J., 2006. Isogeometric
  fluid--structure interaction analysis with applications to arterial blood
  flow. Computational Mechanics 38~(4-5), 310--322.

\bibitem[{Butcher(2016)}]{butcher2016numerical}
Butcher, J.~C., 2016. Numerical methods for ordinary differential equations.
  John Wiley \& Sons.

\bibitem[{Chung and Hulbert(1993)}]{chung1993time}
Chung, J., Hulbert, G., 1993. A time integration algorithm for structural
  dynamics with improved numerical dissipation: the generalized-$\alpha$
  method. Journal of Applied Mechanics 60~(2), 371--375.

\bibitem[{G{\'o}mez et~al.(2008)G{\'o}mez, Calo, Bazilevs, and
  Hughes}]{gomez2008isogeometric}
G{\'o}mez, H., Calo, V.~M., Bazilevs, Y., Hughes, T.~J., 2008. Isogeometric
  analysis of the cahn--hilliard phase-field model. Computer methods in applied
  mechanics and engineering 197~(49-50), 4333--4352.

\bibitem[{Gomez et~al.(2010)Gomez, Hughes, Nogueira, and
  Calo}]{gomez2010isogeometric}
Gomez, H., Hughes, T.~J., Nogueira, X., Calo, V.~M., 2010. Isogeometric
  analysis of the isothermal navier--stokes--korteweg equations. Computer
  Methods in Applied Mechanics and Engineering 199~(25-28), 1828--1840.

\bibitem[{Jansen et~al.(2000)Jansen, Whiting, and
  Hulbert}]{jansen2000generalized}
Jansen, K.~E., Whiting, C.~H., Hulbert, G.~M., 2000. A generalized-$\alpha$
  method for integrating the filtered {N}avier--{S}tokes equations with a
  stabilized finite element method. Computer Methods in Applied Mechanics and
  Engineering 190~(3-4), 305--319.

\bibitem[{Sarmiento et~al.(2018)Sarmiento, Espath, Vignal, Dalcin, Parsani, and
  Calo}]{sarmiento2018energy}
Sarmiento, A., Espath, L., Vignal, P., Dalcin, L., Parsani, M., Calo, V., 2018.
  An energy-stable generalized-$\alpha$ method for the swift--hohenberg
  equation. Journal of Computational and Applied Mathematics 344, 836--851.

\end{thebibliography}

\appendix{}
\section{Other solutions to \eqref{eq:p}}  \label{app:amaf}
There are eight pairs of solutions to the equation \eqref{eq:p} when considering both the eigenvalues to be real and complex. They are symmetric in the sense that four solutions are obtained from the other four solutions by setting $\rho_\infty$ to be $-\rho_\infty$. Thus, we consider the four solutions with $\rho_\infty$ being positive. One of the solution is given in \eqref{eq:amaf} and the other three solutions are
\begin{subequations} \label{eq:amafsol}
\begin{align}
\alpha_m  & = \frac{-13 - 31\rho_\infty + \rho_\infty^2 - 5\rho_\infty^3 }{12(\rho_\infty + 1)^2(\rho_\infty - 1)},  && \alpha_f  = \frac{1 + 3\rho_\infty}{2(\rho_\infty + 1)^2}, \label{eq:psol1} \\[0.2cm]
\alpha_m  & = \frac{22 - 12\rho_\infty + 5\rho_\infty^2 + 3\sqrt{7 + 18\rho_\infty^2} }{12(1 - \rho_\infty^2)}, && \alpha_f  = \frac{5 + \sqrt{7 + 18\rho_\infty^2} }{4(1 - \rho_\infty^2 )}, \label{eq:psol2} \\[0.2cm]
\alpha_m  & = \frac{22 + 12\rho_\infty + 5\rho_\infty^2 - 3\sqrt{7 + 18\rho_\infty^2} }{12(1 - \rho_\infty^2)}, && \alpha_f  = \frac{5 - \sqrt{7 + 18\rho_\infty^2} }{4(1 - \rho_\infty^2 )}. \label{eq:psol3}
\end{align}
\end{subequations}

The solutions \eqref{eq:amaf} and \eqref{eq:psol1} produce real eigenvalues while the solutions \eqref{eq:psol2} and \eqref{eq:psol3} produce complex eigenvalues. Figure \ref{fig:p} shows unconditional stability region and the curves obtained by these solutions when running $\rho_\infty$ from 0 to 1. 
\begin{figure}[!ht]
\centering
\includegraphics[width=12cm]{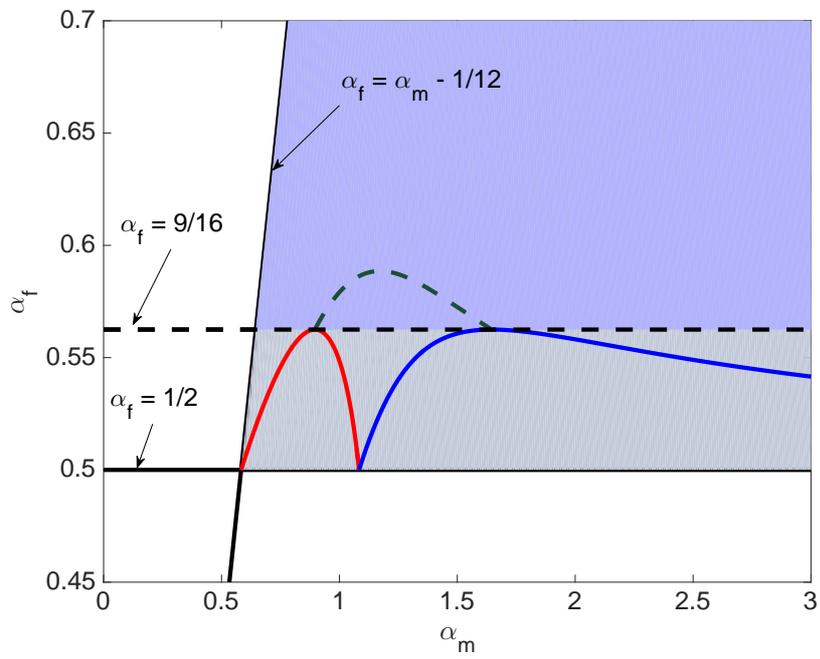}
\caption{Unconditional stability regions and the curves for user-control on eigenvalues. }
\label{fig:p}
\end{figure}
The shaded region bounded by $\alpha_f=1/2$ and $\alpha_f=9/16$ corresponds to region where the eigenvalues are real.  The other filled region corresponds to complex eigenvalues. The solution \eqref{eq:amaf} produces the red curve, when running $\rho_\infty$ from 0 to 1,  in the unconditional stability region. These eigenvalues are real and the values of $\alpha_m$ and $\alpha_f$ are relatively small. Thus, for the high-frequency damping control, we adopt the solution \eqref{eq:amaf}. The blue curve in Figure \ref{fig:p} corresponds to the solution \eqref{eq:psol1} while the dashed dark green curve corresponds to the solution \eqref{eq:psol3}. The solution \eqref{eq:psol2} corresponds to large values of $\alpha_m$ and $\alpha_f$ which is not of interest.

\end{document}